\DeclareMathOperator{\Isom}{Isom}
\newtheorem{theorem}{Theorem}  
\newtheorem{cor}[theorem]{Corollary}  
\theoremstyle{plain}
\newtheorem{thm}{Theorem}
\newtheorem{lemma}[thm]{Lemma}
\newtheorem{corollary}[thm]{Corollary}
\newtheorem{prop}[thm]{Proposition}
\theoremstyle{definition}
\newtheorem{definition}[thm]{Definition}
\newtheorem{remark}[thm]{Remark}
\numberwithin{equation}{section}
\numberwithin{thm}{section}
\theoremstyle{remark} 
\newtheorem*{ack}{Acknowledgements}
\begin{document}

\title{Stability and equivariant Gromov--Hausdorff convergence}

\author[Mohammad Al-Attar]{Mohammad Alattar}
\address[Al-Attar]{Department of Mathematical Sciences, Durham University, United Kingdom}
\email{\href{mailto:mohammad.al-attar@durham.ac.uk}{mohammad.al-attar@durham.ac.uk}}

\date{\today}

\begin{abstract}
   We give applications of equivariant Gromov--Hausdorff convergence in various contexts. Namely, using equivariant Gromov--Hausdorff convergence, we prove a stability result in the setting of compact finite dimensional Alexandrov spaces. Moreover, we introduce the notion of an \emph{almost commutative diagram} and show that its use simplifies both exposition and argument. 
    
    \end{abstract}


	\subjclass[2010]{53C23, 53C20, 51K10}
	\keywords{equivariant Gromov--Hausdorff convergence,  stability}
 
\maketitle

\section{Introduction}

In the early 1990s, Burago, Gromov and Perelman developed the theory of Alexandrov spaces \cite{burago1992ad}. These spaces are metric space generalizations of complete Riemannian manifolds with a lower sectional curvature bound. Typically, in dimensions greater than two, Alexandrov spaces are not manifolds. In general, they are almost manifolds, in the sense that they contain an open dense subset that is a manifold \cite{burago2022course}. Therefore, one cannot automatically transfer the techniques of smooth manifold theory to the theory of Alexandrov spaces. In 1991, Perelman established a stability theorem \cite{Vitali,perelman1991alexandrov}, asserting that if a sequence of compact $n$-dimensional Alexandrov spaces $\{X_k\}_{k\in \mathbb{N}}$ with a uniform lower curvature bound, Gromov--Hausdorff converges to another compact Alexandrov space $X$ with no collapse, then $X_k$ and $X$ are homeomorphic for all sufficiently large $k$.

   In the 1980s, Fukaya introduced a pointed equivariant version of Gromov--Hausdorff convergence \cite{fukaya1986theory}. 
   In 1994, Fukaya and Yamaguchi used the equivariant analogue of Gromov--Hausdorff convergence to establish that the isometry group of an Alexandrov space is a Lie group with respect to the compact-open topology \cite{fukaya1994isometry}. The proof shows that the equivariant version of Gromov--Hausdorff convergence is a powerful tool. In addition, the equivariant Gromov--Hausdorff topology has been applied to the framework of $\mathrm{RCD}^{*}(K,N)$ spaces. In particular, Guijarro and Santos-Rodriguez \cite{GR} and, independently, Sosa \cite{Sosa}, proved using equivariant Gromov--Hausdorff convergence that the isometry group of an $\mathrm{RCD}^{*}(K,N)$ space is a Lie group. These results for Alexandrov and $\mathrm{RCD}^*$ spaces are the departure points for further developments in the study of isometries of singular spaces, which has attracted some  attention (see, for example, \cite{Corro.et.al,Garcia-Guijarro,GGSearle,Garcia2,Garcia3,GG2,harvey2016convergence,harvey2016equivariant,HarveySearle,nunez}).

Our first main theorem is the following stability result with respect to the equivariant Gromov--Hausdorff convergence, generalizing the work of Harvey (\cite{harvey2016equivariant}, Theorem 3.6). Fix a natural number $n$. Denote by $\mathcal{A}(n,K,D)$ to be the class of compact $n$-dimensional Alexandrov spaces with curvature bounded below by $K$ and diameter bounded above by $D$.


\begin{theorem}[Generalized Equivariant Stability]
\label{T:theorem.A}
Let $\{X_k\}_{k=1}^{\infty}$ and $X$ be in $\mathcal{A}(n,K,D)$.  Fix a natural number $m$. Assume that $G_k$ is an $m$-dimensional closed subgroup of $\Isom(X_k)$. Assume $(X_k,G_k)\rightarrow_{eGH}(X,G)$ and that $G$ is an $m$-dimensional closed subgroup of $\Isom(X)$. Then for all large enough $k$, there exists a Lie group isomorphism $\theta_k\colon G_k\rightarrow G$ and a homeomorphism $\xi_k\colon X_k\rightarrow X$ such that if $g_k\in G_k$ then $\xi_k\circ g_k=\theta_k(g_k)\circ \xi_k$.
\end{theorem}

Note that by definition of equivariant Gromov--Hausdorff convergence  (\ref{def}), the actions of $G_k$ and $G$ on $X_k$ and $X$ respectively are \textit{effective}. This property will be crucial in the proof (see \ref{effective remark}). Further note that by \cite{fukaya1994isometry}, each $G_k$ and $G$ is a compact Lie group.

The preceding theorem generalizes Harvey's stability theorem, allowing for different groups acting on the Alexandrov spaces under consideration.


As a corollary to Theorem \ref{T:theorem.A}, we obtain the following result. We denote by $EG$ the universal space. Recall that $EG$ is the countably infinite join of $G$ and $G$ acts freely on $EG$ \cite{Tu}.


\begin{cor}
\label{C:corollary.C}
Let $\{X_k\}_{k=1}^{\infty}$ and $X$ be in $\mathcal{A}(n,K,D)$.  Fix a natural number $m$. Assume that $G_k$ is an $m$-dimensional closed subgroup of $\Isom(X_k)$. Assume $(X_k,G_k)\rightarrow_{eGH}(X,G)$ and that $G$ is an $m$-dimensional closed subgroup of $\Isom(X)$. Then for all sufficiently large $k$, the Borel construction $EG_k\times_{G_k} X_k$ is homeomorphic to the Borel construction $EG\times_{G} X$.
\end{cor}

Our next main theorem is the following stability result concerning the continuity of maps under equivariant Gromov--Hausdorff convergence. 

\begin{theorem}
\label{T:theorem.B}
Let  $\{X_k\}_{k\in \mathbb{N}}$ be a sequence of compact metric spaces. For each $k$, let $G_k$ denote a closed subgroup of the isometry group of $X_k$. Assume that a triple $(f_k,\theta_k,\psi_k)$ witnesses the convergence $(X_k,G_k)\rightarrow_{eGH}(X,G)$, where $X$ is a compact metric space, and $G$ is a closed subgroup of the isometry group of $X$. If $G$ is a euclidean neighborhood retract (ENR), then the maps $\theta_k\colon G_k\rightarrow G$ may be taken to be continuous.
\end{theorem}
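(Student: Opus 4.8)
The plan is to exploit a ``coarse continuity'' property of the approximations $\theta_k$ and then mollify them using the neighbourhood retraction supplied by the ENR hypothesis. Throughout, equip $\Isom(X_k)$ with the bi-invariant metric $d_{G_k}(g,g')=\sup_{x}d_{X_k}(gx,g'x)$, and likewise $d_{G}$ on $\Isom(X)$; since $X_k$ and $X$ are compact these induce the compact-open topology, so the closed subgroups $G_k$, $G$ are compact metric groups. Let $\epsilon_k\to 0$ be the errors of the witnessing triples $(f_k,\theta_k,\psi_k)$.

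First I would record the key estimate: for all $g,g'\in G_k$,
\[
 d_G\big(\theta_k(g),\theta_k(g')\big)\le d_{G_k}(g,g')+5\epsilon_k .
\]
Indeed, combining $d_X(f_k(gx),\theta_k(g)f_k(x))<\epsilon_k$ (and the same for $g'$) with the fact that $f_k$ distorts distances by at most $\epsilon_k$ gives $d_X(\theta_k(g)f_k(x),\theta_k(g')f_k(x))<d_{G_k}(g,g')+3\epsilon_k$ for every $x\in X_k$; since $f_k(X_k)$ is $\epsilon_k$-dense in $X$ and $\theta_k(g),\theta_k(g')$ are isometries, this upgrades to the displayed bound on all of $X$. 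Thus each $\theta_k$ is $(1,5\epsilon_k)$-coarsely Lipschitz, which is the only regularity of $\theta_k$ we shall use.

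Next, fix data witnessing that $G$ is an ENR: a topological embedding $\iota\colon G\hookrightarrow\mathbb{R}^N$, an open set $U\supseteq\iota(G)$, a retraction $r\colon U\to\iota(G)$, and a compact neighbourhood $\overline W$ of $\iota(G)$ with $\overline W\subseteq U$. On compact sets $\iota$, $\iota^{-1}$, $r$ are uniformly continuous, hence have moduli of continuity $\omega_\iota,\omega_{\iota^{-1}},\omega_r$ vanishing at $0$. For each $k$, cover the compact group $G_k$ by finitely many balls $B(g_i,\epsilon_k)$, take a subordinate partition of unity $\{\rho_i\}$, and define the continuous map
\[
 \widetilde\theta_k\colon G_k\to\mathbb{R}^N,\qquad \widetilde\theta_k(g)=\sum_i\rho_i(g)\,\iota\big(\theta_k(g_i)\big).
\]
If $\rho_i(g)\neq 0$ then $d_{G_k}(g,g_i)<\epsilon_k$, so by the key estimate $|\iota(\theta_k(g_i))-\iota(\theta_k(g))|\le\omega_\iota(6\epsilon_k)$; averaging yields $|\widetilde\theta_k(g)-\iota(\theta_k(g))|\le\omega_\iota(6\epsilon_k)$ for all $g\in G_k$. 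Hence for all large $k$ the set $\widetilde\theta_k(G_k)$ lies within $\overline W$, so
\[
 \theta_k':=\iota^{-1}\circ r\circ\widetilde\theta_k\colon G_k\longrightarrow G
\]
is a well-defined continuous map, and since $r$ fixes $\iota(G)\ni\iota(\theta_k(g))$ we get $d_G(\theta_k'(g),\theta_k(g))\le\omega_{\iota^{-1}}\big(\omega_r(\omega_\iota(6\epsilon_k))\big)=:\epsilon_k''\to 0$.

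It then remains to observe that replacing $\theta_k$ by $\theta_k'$ preserves the convergence: for large $k$,
\[
 d_X\big(f_k(gx),\theta_k'(g)f_k(x)\big)\le d_X\big(f_k(gx),\theta_k(g)f_k(x)\big)+d_G\big(\theta_k(g),\theta_k'(g)\big)<\epsilon_k+\epsilon_k'' ,
\]
while the inequality involving $\psi_k$ is untouched, so $(f_k,\theta_k',\psi_k)$ witnesses $(X_k,G_k)\rightarrow_{eGH}(X,G)$ with continuous $\theta_k'$; for the finitely many remaining $k$ one may simply take $\theta_k'$ to be the constant map at the identity, which alters only finitely many terms of the error sequence and so does not affect that it tends to $0$. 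The main obstacle — and the precise place the ENR hypothesis enters — is the mollification step: a genuinely discontinuous map cannot be uniformly approximated by continuous maps, so one must first distil the coarse-Lipschitz estimate and then borrow the ``local convexity'' of an ambient Euclidean neighbourhood retract in order to average and retract back into $G$.
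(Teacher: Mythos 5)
Your proposal is correct and follows essentially the same route as the paper: establish that $\theta_k$ is coarsely Lipschitz (a $5\epsilon_k$-approximation) for the uniform metrics, embed $G$ in $\mathbb{R}^N$ via the ENR structure, average $\iota\circ\theta_k$ over a finite net/partition of unity, retract back to $\iota(G)$, and note that a map uniformly close to $\theta_k$ still witnesses the convergence. The paper's normalized bump functions over a finite $5\epsilon_k$-net are exactly your subordinate partition of unity, so the two arguments coincide in all essentials.
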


For a definition of a \emph{triple that witnesses the convergence}, we refer the reader to definition~\ref{def} (cf. \cite{harvey2016convergence,harvey2016equivariant}). For the definition of an ENR, we refer the reader to \cite{Hatcher}. 

Theorem~\ref{T:theorem.B} is a generalization of one of the technical results used to prove the equivariant stability theorems in \cite{harvey2016convergence,harvey2016equivariant}. The stability result in \cite{harvey2016convergence} is proved for pointed convergence, proper spaces, and when all the groups $G_k$ and $G$ are Lie groups. In \cite{harvey2016equivariant}, the stability result is proved for compact spaces and when $G_k=G$ for all $k$ and $G$ is a compact Lie group. In both proofs, Harvey used the center of mass construction by Grove--Petersen \cite{Grove-Petersen} to approximate discrete maps by continuous maps. In Theorem~\ref{T:theorem.B}, we do not assume that the groups $G_k$ are Lie groups.  Our proof is elementary and does not require Lie group theory nor the center of mass construction. Instead, we only use the definition and properties of equivariant Gromov--Hausdorff convergence and the definition of a euclidean neighborhood retract (ENR).
\\

Our article is organized as follows. In section  \ref{s:equivariant.gh.convergence}, we define the notion of equivariant Gromov--Hausdorff convergence and prove elementary properties. In section \ref{s:proof.technical.thm}, we prove Theorem~\ref{T:theorem.B}. In section \ref{s:proof.stability.thm}, using some ideas from section \ref{s:proof.technical.thm}, we prove Theorem~\ref{T:theorem.A} and Corollary \ref{C:corollary.C}.

\begin{ack}
The author would like to thank Phatimah Alhazmi, Gabriel Arenas Henriquez, Mauricio Che, Saghar Hosseinisemnani, Alexander Jackson, and Viktor Matyas for useful discussions while preparing this manuscript. The author would especially like to thank his advisor Fernando Galaz--García, Jaime Santos-Rodríguez, Martin Kerin, and Kohei Suzuki.
\end{ack}


\section{Equivariant Gromov--Hausdorff Convergence}
\label{s:equivariant.gh.convergence}

In this section, we introduce the equivariant Gromov--Hausdorff distance as initially defined by Fukaya \cite{fukaya1986theory,fukayaandyamaguchiannals}. However, we assume our spaces are compact and as such, we do not require the use of basepoints. We denote the isometry group of a metric space $X$ by $\Isom(X$). Once $X$ is compact, the topology on the space of continuous functions on $X$ is metrizable, and we can take the metric to be the uniform metric.  Due to the technical nature of equivariant Gromov--Hausdorff approximations, and for the sake of easing exposition, we introduce the notion of an \emph{almost commutative diagram}.


\begin{definition}
Let $(X,d_{X}),(Y,d_{Y})$ be metric spaces and fix $\epsilon\geq 0$. We say that the diagram

\begingroup
\centering

\begin{tikzcd}
X \arrow[r, "f"] \arrow[d, "k"] & Y \arrow[d, "g"] \\
X \arrow[r, "h"]                & Y               
\end{tikzcd}

\endgroup

\noindent \textit{commutes up to $\epsilon$} (\textit{$\epsilon$--commutes}) if $d_{Y}(gf(x),hk(x))\leq \epsilon$ for all $x\in X$.  In this case, we will usually write \emph{$g\circ f = h\circ k$ up to $\epsilon$}.
\end{definition}


\begin{remark} When $\epsilon=0$ in the diagram above, we recover the notion of a commutative diagram. Hence, we may think of $\epsilon-$commutative diagrams as being almost commutative.
\end{remark}


\begin{definition}
\label{defeGH}
Let $X$ and $Y$ be compact metric spaces and let $G_{X}$ and $G_{Y}$ be closed subgroups of Isom($X$) and Isom($Y$) respectively. An \textit{equivariant Gromov--Hausdorff approximation of order $\epsilon>0$} between the pairs $(X,G_{X})$ and $(Y,G_{Y})$ is a triple of maps
\[ 
(f\colon X\rightarrow Y,\theta\colon G_{X}\rightarrow G_{Y},\psi\colon G_{Y}\rightarrow G_{X}) 
\]
subject to the following conditions:

\begin{enumerate}
    \item The map $f$ is a Gromov--Hausdorff approximation of order $\epsilon$ (i.e., an $\epsilon$-isometry).
    \item If $\gamma \in G_{X}$ then $\theta(\gamma)\circ f = f\circ \gamma$ up to $\epsilon$.
    \item If $\lambda \in G_{Y}$ then $\lambda \circ f =f\circ \psi(\lambda)$ up to $\epsilon$.

\end{enumerate}

Often, when the spaces $X, Y$ and the groups $G_{X}$ and $G_{Y}$ are understood from context, we refer to the triple $(f,\theta,\psi)$ as an \emph{equivariant Gromov--Hausdorff approximation}.
\end{definition}

Our definition of an equivariant Gromov--Hausdorff approximation is, to the best of our knowledge, not found in the literature. It is, however, trivially equivalent to the standard one (see, for example, \cite{fukaya1986theory}).

Before continuing, we note a few things. First, we use the terms \emph{almost isometries}, \emph{Gromov--Hausdorff approximations}, and \emph{approximations} interchangeably. Similarly, we use the terms \textit{equivariant Gromov--Hausdorff approximations} and \textit{equivariant approximations} interchangeably. Further, an \textit{$\epsilon$-isometry}, where $\epsilon>0$, is the same as an \textit{almost isometry of order $\epsilon>0$}. Similar conventions will be in place for the (almost) equivariant analogue.

Second, we note that the maps $\theta$ and $\psi$ in definition \ref{defeGH} need not be group homomorphisms and $f$ need not be continuous. Recall that an almost isometry of order $\epsilon$, between compact metric spaces $f\colon X\rightarrow Y$ admits an almost inverse $\tilde{f}\colon Y\rightarrow X$ that is a $3\epsilon$-isometry \cite{jansen2017notes}. In particular, $f\circ \tilde{f}=\mathrm{id}$  up to $\epsilon$  and $\tilde{f}\circ f=\mathrm{id}$ up to $\epsilon$. Thus, it is natural to wonder, given an $\epsilon$-equivariant Gromov--Hausdorff approximation $(f,\theta,\psi)$ between $(X,G_{X})$ and $(Y,G_{Y})$, whether there exists an equivariant Gromov--Hausdorff approximation $(\tilde{f},\tilde{\theta},\tilde{\psi})$ between $(Y,G_{Y})$ and $(X,G_{X})$ that serves as an almost inverse to $(f,\theta,\psi)$. The answer is in the affirmative, as we shall show. First, let us sketch the proof in the non-equivariant setting. We will then state and prove the equivariant version.


\begin{lemma}[\cite{jansen2017notes}]
\label{lem:jansen}
Let $f\colon X\rightarrow Y$ be an $\epsilon$-isometry between compact metric spaces. Then there exists a $3\epsilon$-isometry $\tilde{f}\colon Y\rightarrow X$ such that $d_{Y}(f(\tilde{f}(y)),y)\leq 3 \epsilon$ for all $y\in Y$ and $d_{X}(\tilde{f}(f(x)),x)\leq 3\epsilon$ for all $x\in X$.
\end{lemma}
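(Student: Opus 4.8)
The plan is to construct $\tilde f$ explicitly on a finite net and then extend it to all of $Y$. First I would recall why an $\epsilon$-isometry $f\colon X\to Y$ is \emph{$\epsilon$-surjective}: for every $y\in Y$ there is $x\in X$ with $d_Y(f(x),y)\le\epsilon$ (this is part of the definition of a Gromov--Hausdorff approximation of order $\epsilon$). Using the axiom of choice, define $\tilde f(y)$ to be any such $x$, i.e. pick $\tilde f(y)\in X$ with $d_Y(f(\tilde f(y)),y)\le\epsilon$. Then the bound $d_Y(f(\tilde f(y)),y)\le\epsilon\le 3\epsilon$ holds by construction, which is the first of the two displayed inequalities.

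Next I would check the other two assertions. For $d_X(\tilde f(f(x)),x)\le 3\epsilon$: set $y=f(x)$ and $x'=\tilde f(y)$, so $d_Y(f(x'),f(x))=d_Y(f(x'),y)\le\epsilon$. Since $f$ is an $\epsilon$-isometry, $|d_X(x',x)-d_Y(f(x'),f(x))|\le\epsilon$, hence $d_X(\tilde f(f(x)),x)=d_X(x',x)\le d_Y(f(x'),f(x))+\epsilon\le 2\epsilon\le 3\epsilon$. (In fact one gets $2\epsilon$, which is why $3\epsilon$ is comfortable.) Finally, to see $\tilde f$ is a $3\epsilon$-isometry, take $y_1,y_2\in Y$ and write $x_i=\tilde f(y_i)$. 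Then
\[
|d_X(x_1,x_2)-d_Y(f(x_1),f(x_2))|\le\epsilon
\]
because $f$ is an $\epsilon$-isometry, and $|d_Y(f(x_i),y_i)|\le\epsilon$ gives $|d_Y(f(x_1),f(x_2))-d_Y(y_1,y_2)|\le 2\epsilon$ by the triangle inequality; combining, $|d_X(\tilde f(y_1),\tilde f(y_2))-d_Y(y_1,y_2)|\le 3\epsilon$. It remains to note $\tilde f$ is $3\epsilon$-surjective: given $x\in X$, the point $y=f(x)$ satisfies $d_X(\tilde f(y),x)\le 2\epsilon\le 3\epsilon$ by the computation above, so every $x$ is within $3\epsilon$ of the image of $\tilde f$.

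I don't expect a serious obstacle here — the statement is essentially a bookkeeping exercise with the triangle inequality and the two defining properties ($\epsilon$-distortion and $\epsilon$-denseness of the image) of a Gromov--Hausdorff approximation. The only mild subtlety is that $\tilde f$ is defined via a choice function and is therefore not canonical and certainly not continuous, but the statement claims nothing about continuity, so this is harmless. Compactness of $X$ and $Y$ is not really needed for this particular lemma (it matters elsewhere, e.g. for metrizability of the function spaces), though one could invoke it to replace the choice function by a selection on a finite net; I would simply use the choice function for brevity. I would present the argument in the order: (1) define $\tilde f$ by $\epsilon$-surjectivity of $f$; (2) verify $d_Y(f\tilde f(y),y)\le 3\epsilon$ and $d_X(\tilde f f(x),x)\le 3\epsilon$; (3) verify the distortion bound for $\tilde f$; (4) verify $\tilde f$ is $3\epsilon$-dense, concluding that $\tilde f$ is a $3\epsilon$-isometry.
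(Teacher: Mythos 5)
Your proposal is correct and follows exactly the same construction as the paper, which only sketches the choice-function definition of $\tilde f$ and leaves the triangle-inequality verifications to the reader; you have simply supplied those details. No discrepancy to report.
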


\begin{proof}[Proof (Sketch)]
 For each $y\in Y$, choose an $x_y\in X$ such that $d_{Y}(f(x_{y}),y)\leq \epsilon$. Then define $\tilde{f}\colon Y\rightarrow X$ given by $\tilde{f}(y)=x_y$. 
\end{proof}

\begin{prop}
\label{prop:4e.equiv.GH.approximation}
Let $X$ and $Y$ be compact metric spaces. Let $G_{X}$ and $G_{Y}$ be closed subgroups of $\Isom(X)$ and $\Isom(Y)$ respectively. Let $(f,\theta,\psi)$ be an $\epsilon$-equivariant Gromov--Hausdorff approximation between $(X,G_{X})$ and $(Y,G_{Y})$. Then there there exists a $4\epsilon$-equivariant Gromov--Hausdorff approximation between $(Y,G_{Y})$ and $(X,G_{X})$.
\end{prop}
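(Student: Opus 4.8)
The plan is to build the almost-inverse triple $(\tilde f,\tilde\theta,\tilde\psi)$ componentwise, starting from the non-equivariant almost-inverse $\tilde f\colon Y\to X$ provided by Lemma~\ref{lem:jansen}, and then simply \emph{swapping the roles} of $\theta$ and $\psi$: set $\tilde\theta\coloneqq\psi\colon G_Y\to G_X$ and $\tilde\psi\coloneqq\theta\colon G_X\to G_Y$. The intuition is that condition (2) for $(f,\theta,\psi)$, namely $\theta(\gamma)\circ f=f\circ\gamma$ up to $\epsilon$, should, after composing on both sides with $\tilde f$ and using $f\tilde f\approx\mathrm{id}$, $\tilde f f\approx\mathrm{id}$, turn into a statement of the form $\tilde f\circ\gamma\approx\theta(\gamma)^{-1}\cdots$; more directly, condition (3) for $(f,\theta,\psi)$ reads $\lambda\circ f=f\circ\psi(\lambda)$ up to $\epsilon$, and this is exactly the shape of condition (2) for the triple $(\tilde f,\psi,\theta)$ between $(Y,G_Y)$ and $(X,G_X)$ once we transport it across $\tilde f$. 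So the only real work is bookkeeping the error accumulation.

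First I would record the basic estimates: by Lemma~\ref{lem:jansen}, $\tilde f$ is a $3\epsilon$-isometry with $d_Y(f\tilde f(y),y)\le 3\epsilon$ and $d_X(\tilde f f(x),x)\le 3\epsilon$. Next, to verify condition (2) for $(\tilde f,\tilde\theta,\tilde\psi)=(\tilde f,\psi,\theta)$, I must show $\psi(\lambda)\circ\tilde f=\tilde f\circ\lambda$ up to $4\epsilon$ for $\lambda\in G_Y$. Fix $y\in Y$ and estimate $d_X\bigl(\psi(\lambda)\tilde f(y),\,\tilde f\lambda(y)\bigr)$. Apply the $3\epsilon$-isometry $f$ (which changes distances by at most $3\epsilon$) to reduce to estimating $d_Y\bigl(f\psi(\lambda)\tilde f(y),\,f\tilde f\lambda(y)\bigr)$. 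Now use condition (3) of $(f,\theta,\psi)$ at the point $\tilde f(y)$: $f\psi(\lambda)\tilde f(y)$ is within $\epsilon$ of $\lambda f\tilde f(y)$; then $\lambda$ is an isometry and $f\tilde f(y)$ is within $3\epsilon$ of $y$, so $\lambda f\tilde f(y)$ is within $3\epsilon$ of $\lambda(y)$; and $f\tilde f\lambda(y)$ is within $3\epsilon$ of $\lambda(y)$. Adding these and the initial $3\epsilon$ from applying $f$ gives a bound of order $\epsilon$ up to a constant; the same scheme, using condition (2) of $(f,\theta,\psi)$, handles condition (3) for the new triple. I would then check that the constants can all be absorbed into $4\epsilon$ — if a naive sum gives something slightly larger, I would either tighten the estimate (e.g.\ choosing the preimages $x_y$ in Lemma~\ref{lem:jansen} more carefully, or noting some of the $3\epsilon$ terms can be taken as $\epsilon$) or, if the paper's conventions allow, simply note that the constant is immaterial; but since the statement commits to $4\epsilon$, the honest route is a careful triangle-inequality count showing four terms of size $\epsilon$ suffice after the reduction.

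The main obstacle I anticipate is precisely pinning the constant at $4\epsilon$ rather than something like $7\epsilon$ or $10\epsilon$: a careless chain of triangle inequalities through $f$, $\tilde f$, $f\tilde f\approx\mathrm{id}$ and the defining $\epsilon$-relations easily produces a larger constant, so the argument must be organized to apply $f$ (with its $3\epsilon$ distortion) as few times as possible — ideally the estimate should be carried out in $Y$ directly using that $\tilde f$ is a $3\epsilon$-isometry, rather than pushing forward and back. A secondary point to be careful about is that $\tilde f$ need not be continuous and $\theta,\psi$ need not be homomorphisms, so I must only ever use the three defining inequalities and the isometry property of group elements, never any algebraic or topological structure. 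Once the constant is controlled, verifying that $\tilde f$ is itself a $3\epsilon\le 4\epsilon$ almost isometry (condition (1)) is immediate from Lemma~\ref{lem:jansen}, and the proposition follows.
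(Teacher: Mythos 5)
Your proposal is correct and follows essentially the same route as the paper: the almost-inverse triple is $(\tilde f,\psi,\theta)$, and the key estimate is obtained by pushing forward with $f$ and chaining condition (3) of the original triple with the defining choice of $x_y$. The tightening you anticipate is exactly the paper's computation: $f$ is an $\epsilon$-isometry (not a $3\epsilon$-isometry), and $d_Y(f\tilde f(y),y)=d_Y(f(x_y),y)\le\epsilon$ by construction rather than the coarser $3\epsilon$ of Lemma~\ref{lem:jansen}, so each of the four triangle-inequality terms contributes $\epsilon$ and the total is $4\epsilon$.
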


\begin{proof}
As $f$ is an $\epsilon$-Gromov--Hausdorff approximation, for each $y\in Y$, we may find  $x_y\in X$ such that $d_{Y}(f(x_y),y)\leq \epsilon$. Now define $\tilde{f}\colon Y\rightarrow X$ by $\tilde{f}(y)=x_y$. Lemma~\ref{lem:jansen} shows that $\tilde{f}$ is an $3\epsilon$- isometry (and thus a $4\epsilon$-isometry). Now we show that $(\tilde{f},\psi,\theta)$ is a $4\epsilon$-equivariant Gromov--Hausdorff approximation between $(Y,G_{Y})$ and $(X,G_{X})$.

Let $\lambda \in G_{Y}$. We will show that $\psi(\lambda)\circ \tilde{f}= \tilde{f}\circ \lambda$ up to $4\epsilon$. Let $y\in Y$. Then, $\tilde{f}(y)=x_y$, where $x_y$ is chosen such that $d_{Y}(f(x_y),y)\leq \epsilon$. Similarly, for $\lambda y\in Y$,  $\tilde{f}(\lambda y)=x_{\lambda y}$, where $x_{\lambda y}$ is chosen so that $d_{Y}(f(x_{\lambda y}),\lambda y)\leq \epsilon$. Since $f$ is an $\epsilon$-isometry,
\[d_{X}(\psi(\lambda)(x_y),x_{\lambda y})\leq d_{Y}(f(\psi(\lambda)(x_{y})),f(x_{\lambda y}))+\epsilon.\]

The triangle inequality yields
\[d_{Y}(f(\psi(\lambda(x_y))),f(x_{\lambda y}))\leq d_{Y}(f(
\psi(\lambda)(x_y)),\lambda f(x_y))+d_{Y}(\lambda f(x_y),f(x_{\lambda y})).\]

Since $\lambda \circ f =f\circ \psi(\lambda)$ up to $\epsilon$, one gets $d_{Y}(f(\psi(\lambda)(x_y)),\lambda f(x_y))\leq \epsilon$. Now  clearly,

\[d_{Y}(\lambda f(x_y),f(x_{\lambda y}))\leq d_{Y}(\lambda f(x_y),\lambda y)+ d_{Y}(\lambda y, f(x_{\lambda y}))\leq 2\epsilon\]

Hence, $d_{X}(\psi(\lambda)(x_y),x_{\lambda y})\leq 4\epsilon$.  Since \[d_{X}(\psi(\lambda)(x_y),x_{\lambda y})=d_{X}(\psi(\lambda)(\tilde{f}(y)),\tilde{f}(\lambda y))\] and $y$ is arbitrary, we see that $\psi(\lambda)\circ \tilde{f} = \tilde{f} \circ \lambda$ up to $4\epsilon$.  A similar argument establishes that for $g\in G_{X}$, $g\circ \tilde{f}= \tilde{f}\circ \theta(g)$ up to $4\epsilon$.
\end{proof}

Analogous to Gromov--Hausdorff convergence, one would like a method of convergence, stronger than Gromov--Hausdorff convergence, that respects groups acting on the spaces. Thus, one formulates the following definition (cf.\ \cite{fukaya1986theory,harvey2014around,harvey2016convergence}).

\begin{definition}
\label{def}
Let $\{X_k\}_{k \in \mathbb{N}}$ be a sequence of compact metric spaces and let $G_k$ be a closed subgroup of Isom$(X_k)$. Let $X$ be a compact metric space and $G$ a closed subgroup of $\Isom(X)$. We say that $(X_k,G_k)$ \textit{equivariantly Gromov--Hausdorff converges} to $(X,G)$, denoted $(X_k,G_k)\rightarrow_{eGH} (X,G)$, if for every $\epsilon>0$, there exists some positive integer $N$ such that for all $k\geq N$ we have an $\epsilon$-equivariant Gromov--Hausdorff approximation $(f_k,\theta_k,\psi_k)$ between $(X_k,G_k)$ and $(X,G)$. 
\end{definition}

We will refer to a triple $(f_k,\theta_k,\psi_k)$ of $\epsilon_k$-equivariant Gromov--Hausdorff approximations appearing in the definition of equivariant Gromov--Hausdorff convergence $(X_k,G_k)\rightarrow_{eGH}(X,G)$, as a \emph{triple that witnesses the convergence} (cf. \cite{harvey2016convergence,harvey2016equivariant}).

The notion of convergence above yields a distance function $d_{eGH}$ on pairs of the form $(X,G)$, where the first factor is a compact space and the second factor a closed subgroup of $\Isom(X)$. In this case, the distance between two such pairs $(X,G_{X})$ and $(Y,G_{Y})$ is defined to be the infimum of all $\epsilon>0$ for which there exists an $\epsilon$-equivariant Gromov--Hausdorff approximation from $(X,G_{X})$ to $(Y,G_{Y})$ and from $(Y,G_{Y})$ to $(X,G_{X})$ \cite{Hausdorff-Fukaya,fukaya1986theory,fukayaandyamaguchiannals}. Proposition~\ref{prop:4e.equiv.GH.approximation} implies the following result.

\begin{prop}
Let $\{X_k\}_{k\in \mathbb{N}}$ be a sequence of compact metrics spaces and $X$ is a compact metric space. Assume for each integer $k$, $G_k$ denotes a closed subgroup of $\Isom(X_k)$ and $G$ is a closed subgroup of $\Isom(X)$. Then the following assertions are equivalent:
\begin{enumerate}
    \item $(X_k,G_k)\rightarrow_{eGH}(X,G)$.
    \item $\lim_{k\rightarrow \infty}d_{eGH}((X_k,G_k),(X,G))=0$.
\end{enumerate}
\end{prop}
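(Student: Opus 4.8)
The plan is to prove the equivalence directly from the definitions, using Proposition~\ref{prop:4e.equiv.GH.approximation} to convert one-sided approximations into two-sided ones. The implication $(2)\Rightarrow(1)$ is essentially immediate: if $d_{eGH}((X_k,G_k),(X,G))\to 0$, then for every $\epsilon>0$ there is an $N$ such that for all $k\geq N$ we have $d_{eGH}((X_k,G_k),(X,G))<\epsilon$, and by the definition of the infimum this yields an $\epsilon$-equivariant Gromov--Hausdorff approximation between $(X_k,G_k)$ and $(X,G)$ (in fact in both directions, but one direction suffices), which is exactly the statement that $(X_k,G_k)\rightarrow_{eGH}(X,G)$.

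For the implication $(1)\Rightarrow(2)$, I would argue as follows. Fix $\epsilon>0$. By the definition of equivariant Gromov--Hausdorff convergence applied with the parameter $\epsilon/4$, there is an $N$ such that for all $k\geq N$ there exists an $(\epsilon/4)$-equivariant Gromov--Hausdorff approximation $(f_k,\theta_k,\psi_k)$ between $(X_k,G_k)$ and $(X,G)$. Proposition~\ref{prop:4e.equiv.GH.approximation} then provides a $4\cdot(\epsilon/4)=\epsilon$-equivariant Gromov--Hausdorff approximation between $(X,G)$ and $(X_k,G_k)$. Hence for all $k\geq N$ there exist $\epsilon$-equivariant approximations in both directions, so $d_{eGH}((X_k,G_k),(X,G))\leq\epsilon$. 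Since $\epsilon>0$ was arbitrary and the distance is nonnegative, $\lim_{k\to\infty}d_{eGH}((X_k,G_k),(X,G))=0$.

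I do not anticipate a serious obstacle here; the content is bookkeeping with the constants. The one point requiring mild care is that the definition of $d_{eGH}$ demands approximations \emph{in both directions}, whereas the definition of $\rightarrow_{eGH}$ only supplies one direction a priori — this is exactly the gap that Proposition~\ref{prop:4e.equiv.GH.approximation} is designed to close, and the factor of $4$ in that proposition is the reason for starting with $\epsilon/4$ rather than $\epsilon$. A second minor point is to note that the distance between two such pairs is finite (for instance it is bounded in terms of the ordinary Gromov--Hausdorff distance plus the diameters), so that the limit statement in $(2)$ is meaningful; this can be dispatched in a sentence or simply taken for granted from the preceding discussion.
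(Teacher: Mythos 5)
Your proposal is correct and takes essentially the same route as the paper, which simply observes that Proposition~\ref{prop:4e.equiv.GH.approximation} supplies the reverse-direction approximation needed to bound $d_{eGH}$; your constant bookkeeping with $\epsilon/4$ is a valid way to make that observation precise.
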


\section{Proof of Theorem \ref{T:theorem.B}}
\label{s:proof.technical.thm}
In what follows, we denote by $d_{G_{X}}$ and $d_{G_Y}$ the uniform metrics on $G_{X}$ and $G_Y$, respectively.

\begin{prop}
 Assume $(f,\theta,\psi)$ is an $\epsilon$-equivariant Gromov--Hausdorff approximation between $(X,G_{X})$ and $(Y,G_{Y})$. Then, for any $g_{Y}\in G_{Y}$, there exists a $g_{X}\in G_{X}$ such that $d_{G_{Y}}(g_{Y},\theta(g_{X}))\leq 4\epsilon$.
\end{prop}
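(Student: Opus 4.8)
The plan is to exploit the near-surjectivity of $f$ and the approximate equivariance relations in Definition~\ref{defeGH}, combined with the fact that $\theta$ is compatible with $\psi$ through $f$. First I would fix $g_Y \in G_Y$ and set $g_X = \psi(g_Y) \in G_X$; this is the natural candidate, since $\psi$ is precisely the map that transports isometries from $G_Y$ back to $G_X$. The goal is then to estimate $d_{G_Y}(g_Y, \theta(\psi(g_Y)))$, i.e.\ to show $d_Y(g_Y(y), \theta(\psi(g_Y))(y)) \leq 4\epsilon$ for every $y \in Y$.

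The key steps, in order, are as follows. Given $y \in Y$, use that $f$ is an $\epsilon$-isometry (hence $4\epsilon$-surjective, but in fact $\epsilon$-dense image suffices) to pick $x \in X$ with $d_Y(f(x), y) \leq \epsilon$. Now chain the two approximate-equivariance conditions: condition (3) applied to $\lambda = g_Y$ gives $d_Y(g_Y f(x), f(\psi(g_Y)(x))) \leq \epsilon$, and condition (2) applied to $\gamma = \psi(g_Y)$ gives $d_Y(\theta(\psi(g_Y)) f(x), f(\psi(g_Y)(x))) \leq \epsilon$. Combining these via the triangle inequality yields $d_Y(g_Y f(x), \theta(\psi(g_Y)) f(x)) \leq 2\epsilon$. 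Finally, since $g_Y$ and $\theta(\psi(g_Y))$ are isometries of $Y$ and $d_Y(f(x), y) \leq \epsilon$, we get $d_Y(g_Y(y), g_Y(f(x))) \leq \epsilon$ and $d_Y(\theta(\psi(g_Y))(y), \theta(\psi(g_Y))(f(x))) \leq \epsilon$, so one more triangle inequality gives $d_Y(g_Y(y), \theta(\psi(g_Y))(y)) \leq 4\epsilon$. Taking the supremum over $y \in Y$ gives $d_{G_Y}(g_Y, \theta(\psi(g_Y))) \leq 4\epsilon$, as desired.

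I expect the main (modest) obstacle to be bookkeeping with the order of composition in the approximate-commutativity relations: one must apply condition (2) not to $g_Y$ or $\psi(g_Y)$ carelessly but precisely to the element $\psi(g_Y) \in G_X$, matching the $\epsilon$-commuting square $\theta(\gamma) \circ f = f \circ \gamma$ with $\gamma = \psi(g_Y)$, and similarly condition (3) to $\lambda = g_Y$. Aside from that, the argument is a routine four-fold triangle inequality and uses nothing beyond Definition~\ref{defeGH} and the fact that elements of $G_Y$ are isometries; no completeness or compactness of the groups is needed for this particular estimate (compactness of $X$, $Y$ is only used implicitly to know $f$ has $\epsilon$-dense image). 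One could alternatively phrase the chaining in the almost-commutative-diagram language, stacking the square from (3) on top of the square from (2), but writing it out with explicit points keeps the constant tracking transparent.
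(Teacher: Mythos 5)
Your proposal is correct and follows essentially the same route as the paper: take $g_X=\psi(g_Y)$, pick $x$ with $d_Y(f(x),y)\leq\epsilon$, use conditions (2) and (3) of the definition to get $d_Y(g_Yf(x),\theta(\psi(g_Y))f(x))\leq 2\epsilon$, and absorb the remaining $2\epsilon$ via the isometry property and the triangle inequality. The only difference is the order in which the triangle inequalities are chained, which is immaterial.
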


\begin{proof}
Let $g\in G_{Y}$ and put $g_{X}=\psi(g)$. Fix $y\in Y$. Since $f$ is an $\epsilon$-isometry, there exists an $x\in X$ such that $d_{X}(f(x),y)\leq \epsilon$. Hence, after applying the triangle inequality twice, one gets
\[
d_{Y}(gy,\theta(g_{X})(y))\leq 2\epsilon +d_{Y}(gf(x),\theta(g_{X})(f(x))).
\]

Thus, it suffices to prove that
\[
d_{Y}(gf(x),\theta(g_{X})(f(x)))\leq 2\epsilon.
\]

To establish this inequality, first observe that
\[
d_{Y}(gf(x),\theta(g_{X})(f(x)))\leq d_{Y}(gf(x),f(g_{X}x))+d_{Y}(f(g_{X}x),\theta(g_{X})(f(x))). 
\]

Recall that $g_{X}=\psi(g)$. Therefore, as $f\circ \psi(g) = g\circ f$ up to $\epsilon$, one gets $d_{Y}(gf(x),f(g_{X}x))\leq \epsilon$. As $f\circ g_{X}=\theta(g_{X})\circ f$ up to $\epsilon$, one gets $d_{Y}(gf(x),\theta(g_{X})(f(x)))\leq 2\epsilon$.
\end{proof}

\begin{prop}
Assume  $(f,\theta,\psi)$ is an $\epsilon$-equivariant Gromov--Hausdorff approximation between $(X,G_{X})$ and $(Y,G_{Y})$. Then, for $g_{X},g_{X}'\in G_{X}$, 
$d_{G_{Y}}(\theta(g_{X}),\theta(g_{X}'))\leq 5\epsilon + d_{G_X}(g_{X},g_{X}')$.
\end{prop}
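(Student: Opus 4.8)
The plan is to estimate $d_Y(\theta(g_X)(y), \theta(g_X')(y))$ for an arbitrary $y \in Y$ by pulling back through the approximation $f$. First I would use that $f$ is an $\epsilon$-isometry to pick $x \in X$ with $d_Y(f(x), y) \le \epsilon$; since $\theta(g_X)$ and $\theta(g_X')$ are isometries of $Y$, the triangle inequality reduces the problem (at the cost of $2\epsilon$) to bounding $d_Y(\theta(g_X)(f(x)), \theta(g_X')(f(x)))$. Then I would insert the intermediate terms $f(g_X x)$ and $f(g_X' x)$: by condition (2) in Definition~\ref{defeGH}, $\theta(g_X) \circ f = f \circ g_X$ up to $\epsilon$ and likewise for $g_X'$, so $d_Y(\theta(g_X)(f(x)), f(g_X x)) \le \epsilon$ and $d_Y(f(g_X' x), \theta(g_X')(f(x))) \le \epsilon$, contributing $2\epsilon$ more.

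It remains to bound the middle term $d_Y(f(g_X x), f(g_X' x))$. Here I would use that $f$ is an $\epsilon$-isometry in the other direction: $d_Y(f(g_X x), f(g_X' x)) \le d_X(g_X x, g_X' x) + \epsilon$. Finally, since $g_X$ is an isometry, $d_X(g_X x, g_X' x) = d_X(x, g_X^{-1} g_X' x) \le d_{G_X}(g_X, g_X')$ by definition of the uniform metric — or more directly, $d_X(g_X x, g_X' x) \le \sup_{z \in X} d_X(g_X z, g_X' z) = d_{G_X}(g_X, g_X')$. Collecting the four constant contributions $2\epsilon + \epsilon + \epsilon + \epsilon = 5\epsilon$ together with $d_{G_X}(g_X, g_X')$, and noting $y$ was arbitrary so the bound passes to the supremum defining $d_{G_Y}(\theta(g_X), \theta(g_X'))$, gives the claim.

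I do not anticipate a genuine obstacle here; the argument is a routine chaining of the three defining inequalities of an equivariant approximation plus the isometry property. The only point requiring a little care is the bookkeeping of constants and making sure the $\epsilon$-isometry property is invoked with the correct direction of the inequality (once to remove $d_X$ in favour of $d_Y$ at the start, and once to reintroduce $d_X$ for the middle term). One should also be mild about the edge case where no suitable $x$ exists with $d_Y(f(x),y) \le \epsilon$, but this is exactly the content of $f$ being an $\epsilon$-isometry (near-surjectivity), so it causes no trouble.
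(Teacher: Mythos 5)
Your proposal is correct and follows essentially the same route as the paper: reduce to the point $f(x)$ at a cost of $2\epsilon$ using that $\theta(g_X),\theta(g_X')$ are isometries, insert the intermediate terms $f(g_Xx)$ and $f(g_X'x)$ via the almost-equivariance condition (costing $2\epsilon$), and control the middle term by the $\epsilon$-isometry property of $f$ together with the definition of the uniform metric (costing the final $\epsilon$ plus $d_{G_X}(g_X,g_X')$). The bookkeeping matches the paper's exactly.
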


\begin{proof}
Let $y\in Y$. Choose $x\in X$ such that $d_{Y}(f(x),y)\leq \epsilon$. Hence,
\[
d_{Y}(\theta(g_{X})(y),\theta(g_{X}')(y))\leq \epsilon+ d_{Y}(\theta(g_{X})(f(x)),\theta(g_{X}')(y)).
\]

The triangle inequality along with the fact that $\theta$ takes isometries to isometries, implies that
\[
d_{Y}(\theta(g_{X})(f(x)),\theta(g_{X}')(y))\leq \epsilon+d_{Y}(\theta(g_{X})(f(x)),\theta(g_{X}')(f(x))).
\]

Combining the preceding inequalities, we see that
\[
d_{Y}(\theta(g_{X})(y),\theta(g_{X}')(y))\leq 2\epsilon+d_{Y}(\theta(g_{X})(f(x)),\theta(g_{X}')(f(x))).
\]

To prove the proposition, it suffices to prove that
\[
d_{Y}(\theta(g_{X})(f(x)),\theta(g_{X}')(f(x)))\leq 3\epsilon +d_{G_X}(g_{X},g_{X}').
\]

To prove the preceding inequality, observe first that one clearly has 
\[
d_{Y}(\theta(g_{X})(f(x)), \theta(g_{X}')(f(x)))\leq d_{Y}(\theta(g_{X})(f(x)),f(g_{X}x))+d_{Y}(f(g_{X}x),\theta(g_{X}')(f(x))).
\]

As $\theta(g_{X})\circ f= f\circ g_{X}$ up to $\epsilon$, one gets $d_{Y}(\theta(g_{X})(f(x)),f(g_{X}x))\leq \epsilon$. Thus, it remains to show that $d_{Y}(f(g_{X}x),\theta(g_{X}')(f(x)))\leq 2\epsilon+d_{G_{X}}(g_{X},g_{X}')$. Indeed,
\[
d_{Y}(f(g_{X}x),\theta(g_{X}')(f(x)))\leq d_{Y}(f(g_{X}x),f(g_{X}'x))+d_{Y}(f(g_{X}'x),\theta(g_{X}')(f(x))).
\]

Since $f$ is an $\epsilon$-Gromov--Hausdorff approximation, $d_{Y}(f(g_{X}x),f(g_{X}'x))\leq \epsilon +d_{G_X}(g_{X},g_{X}')$. Since $f\circ g_{X}'=\theta(g_{X}')\circ f$ up to $\epsilon$, the desired inequality then follows.
\end{proof}

The proof of the next proposition is similar to the previous proposition, we therefore omit it.

\begin{prop}
Let $X$ and $Y$ be compact metric spaces. Assume $(f,\theta,\psi)$ is an $\epsilon$-equivariant Gromov--Hausdorff approximation between $(X,G_{X})$ and $(Y,G_{Y})$. Then, for $g_{X},g_{X}'\in G_{X}$, $d_{G_Y}(\theta(g_{X}),\theta(g_{X}'))\geq d_{G_X}(g_{X},g_{X}')-5\epsilon$.
\end{prop}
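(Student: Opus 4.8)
The plan is to estimate, for each fixed $x\in X$, the quantity $d_X(g_Xx,g_X'x)$ from above by $d_{G_Y}(\theta(g_X),\theta(g_X'))+3\epsilon$, and then to pass to the supremum over $x$, which recovers $d_{G_X}(g_X,g_X')$ on the left. Unlike the previous proposition, this direction does not require the almost-inverse of Proposition~\ref{prop:4e.equiv.GH.approximation}; it is a direct transfer argument through $f$.

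First I would fix $x\in X$ and invoke the $\epsilon$-isometry property of $f$ in the form $d_X(g_Xx,g_X'x)\le d_Y\big(f(g_Xx),f(g_X'x)\big)+\epsilon$. Next, condition~(2) of Definition~\ref{defeGH}, applied to $g_X$ and to $g_X'$, gives $d_Y\big(f(g_Xx),\theta(g_X)(f(x))\big)\le\epsilon$ and $d_Y\big(f(g_X'x),\theta(g_X')(f(x))\big)\le\epsilon$, since $\theta(g_X)\circ f=f\circ g_X$ up to $\epsilon$ and likewise for $g_X'$. Two applications of the triangle inequality then yield $d_Y\big(f(g_Xx),f(g_X'x)\big)\le d_Y\big(\theta(g_X)(f(x)),\theta(g_X')(f(x))\big)+2\epsilon$, and the first term on the right is at most $d_{G_Y}(\theta(g_X),\theta(g_X'))$ by definition of the uniform metric. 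Combining the three estimates gives $d_X(g_Xx,g_X'x)\le d_{G_Y}(\theta(g_X),\theta(g_X'))+3\epsilon$ for every $x\in X$. Taking the supremum over $x\in X$ — all quantities are finite since $X$ is compact — produces $d_{G_X}(g_X,g_X')\le d_{G_Y}(\theta(g_X),\theta(g_X'))+3\epsilon\le d_{G_Y}(\theta(g_X),\theta(g_X'))+5\epsilon$, and rearranging gives the stated inequality; in fact this argument yields the sharper constant $3\epsilon$.

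I do not expect a real obstacle here. The only point requiring attention is bookkeeping: one must use the $\epsilon$-isometry inequality in the correct direction, namely $d_X(a,b)\le d_Y(f(a),f(b))+\epsilon$ rather than the reverse, and one must note that the pointwise estimates upgrade to an estimate between the uniform metrics simply by taking a supremum over $x$, with no continuity or group-homomorphism hypothesis on $\theta$ needed.
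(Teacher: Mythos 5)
Your proposal is correct and follows essentially the same route as the paper: transfer $d_X(g_Xx,g_X'x)$ through $f$ using the $\epsilon$-isometry bound, then insert $\theta(g_X)f(x)$ and $\theta(g_X')f(x)$ via two triangle inequalities and condition (2) of the definition, and take the supremum over $x$. Your observation that the argument actually yields the sharper constant $3\epsilon$ is also accurate; the paper simply states the weaker $5\epsilon$ bound.
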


Now the following corollaries follow from the definition of a Gromov--Hausdorff approximation. Namely, what we have shown is that once we metrize the compact open topology on the isometry groups by the uniform metrics, then the maps $\theta_k$, that are part of a triple $(f_k,\theta_k,\psi_k)$ that is an $\epsilon_k$-equivariant Gromov--Hausdorff approximation demonstrating the equivariant convergence $(X_k,G_k)\rightarrow_{eGH}(X,G)$, are in fact $5\epsilon_k$-Gromov--Hausdorff approximations.

\begin{corollary}
\label{cor:useful}
Let $X_k$ and $X$ be compact metric spaces and let $G_k$ and $G$ be closed subgroups of $\Isom(X_k)$ and $\Isom(X)$ respectively.  If $(f_k,\theta_k,\psi_k)$ is an $\epsilon_k$-equivariant Gromov--Hausdorff approximation from $(X_k,G_k)$ to $(X,G)$ then, under the uniform metric, $\theta_k\colon G_k\rightarrow G$ is an $5\epsilon_k$-Gromov--Hausdorff approximation.
\end{corollary}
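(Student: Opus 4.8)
The plan is to observe that Corollary~\ref{cor:useful} is an immediate consequence of the three preceding propositions, so the proof amounts to assembling them and checking that the definition of a Gromov--Hausdorff approximation is met. Recall that a map $\phi\colon A \to B$ between metric spaces is a $\delta$-Gromov--Hausdorff approximation (a $\delta$-isometry) precisely when it is \emph{$\delta$-almost distance-preserving}, i.e. $|d_B(\phi(a),\phi(a')) - d_A(a,a')| \leq \delta$ for all $a,a' \in A$, and it is \emph{$\delta$-almost surjective}, i.e. every point of $B$ lies within distance $\delta$ of the image $\phi(A)$. So for $\phi = \theta_k$, $A = (G_k, d_{G_k})$, $B = (G, d_G)$, and $\delta = 5\epsilon_k$, I need exactly these two facts.

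First I would handle almost distance preservation. The second proposition in this section gives $d_{G}(\theta_k(g), \theta_k(g')) \leq 5\epsilon_k + d_{G_k}(g,g')$, and the third proposition gives $d_{G}(\theta_k(g), \theta_k(g')) \geq d_{G_k}(g,g') - 5\epsilon_k$. Combining these two inequalities yields $|d_{G}(\theta_k(g),\theta_k(g')) - d_{G_k}(g,g')| \leq 5\epsilon_k$ for all $g,g' \in G_k$, which is the $5\epsilon_k$-almost-distance-preserving condition. Then for almost surjectivity, the first proposition in this section says that for any $g_Y \in G$ there exists $g_X \in G_k$ with $d_{G}(g_Y, \theta_k(g_X)) \leq 4\epsilon_k \leq 5\epsilon_k$; hence every element of $G$ is within $5\epsilon_k$ of the image of $\theta_k$. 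Together these establish that $\theta_k$ is a $5\epsilon_k$-Gromov--Hausdorff approximation with respect to the uniform metrics.

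There is really no substantive obstacle here — the three propositions were stated and proved precisely so that this corollary drops out — but the one point that warrants a sentence of care is that the three propositions are stated for a single $\epsilon$-equivariant approximation $(f,\theta,\psi)$ between generic pairs $(X,G_X)$ and $(Y,G_Y)$, so I should explicitly instantiate them with $(X,G_X) = (X_k, G_k)$, $(Y,G_Y) = (X, G)$, $\epsilon = \epsilon_k$, and $(f,\theta,\psi) = (f_k,\theta_k,\psi_k)$, and note that $\theta_k$ indeed maps into $G$ (not merely into $\Isom(X)$) because $\theta_k$ is the second component of a triple of the required form. With that bookkeeping in place, the proof is a two-line citation of the three propositions followed by the remark that the two displayed inequalities and the almost-surjectivity statement are exactly the defining conditions of a $5\epsilon_k$-isometry.
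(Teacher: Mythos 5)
Your proposal is correct and follows exactly the route the paper intends: the paper states the corollary as an immediate consequence of the three preceding propositions, with the first supplying $4\epsilon_k$-almost surjectivity and the second and third supplying the two-sided $5\epsilon_k$ distance estimate, just as you assemble them. The instantiation bookkeeping you mention is fine and no gap remains.
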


\begin{corollary}
Let $\{X_k\}_{k\in \mathbb{N}}$ be a sequence of compact metric spaces and let $G_k$ be a closed subgroup of $\Isom(X_k)$. Assume $X$ is a compact metric space, with corresponding closed subgroup $G$ of $\Isom(X)$ such that $(X_k,G_k)\rightarrow_{eGH}(X,G)$. Then, $G_k\rightarrow_{GH} G$ with respect to the uniform metrics.
\end{corollary}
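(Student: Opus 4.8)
The plan is to derive this corollary directly from Corollary~\ref{cor:useful} by unwinding the definitions of equivariant and of Gromov--Hausdorff convergence. First I would recall that $G_k \to_{GH} G$ with respect to the uniform metrics means exactly: for every $\delta > 0$ there is an $N$ such that for all $k \geq N$ there exists a $\delta$-Gromov--Hausdorff approximation $G_k \to G$ (with respect to $d_{G_k}$ and $d_G$). So the task reduces to producing, for each prescribed $\delta$, such a map for all large $k$.

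Next I would invoke the hypothesis $(X_k,G_k)\to_{eGH}(X,G)$. By Definition~\ref{def}, fixing $\epsilon = \delta/5$, there is an $N$ such that for every $k \geq N$ there is an $\epsilon$-equivariant Gromov--Hausdorff approximation $(f_k,\theta_k,\psi_k)$ between $(X_k,G_k)$ and $(X,G)$. Then Corollary~\ref{cor:useful} applies verbatim: for each such $k$, the map $\theta_k\colon G_k \to G$ is a $5\epsilon_k$-Gromov--Hausdorff approximation under the uniform metrics, where $\epsilon_k \leq \epsilon = \delta/5$, hence a $\delta$-Gromov--Hausdorff approximation (an $\eta$-approximation is also an $\eta'$-approximation for $\eta' \geq \eta$). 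Since $\delta > 0$ was arbitrary, this is precisely the statement that $G_k \to_{GH} G$ with respect to the uniform metrics.

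This argument is essentially a bookkeeping exercise, so I do not anticipate a genuine obstacle; the only point requiring a moment's care is the quantifier bookkeeping, namely confirming that the $N$ extracted from the definition of $\to_{eGH}$ at level $\epsilon = \delta/5$ is exactly the $N$ needed at level $\delta$ for $\to_{GH}$, and that the monotonicity of ``being an $\eta$-approximation'' in $\eta$ is being used correctly. One should also note that $G_k$ and $G$ are genuine compact metric spaces under the uniform metric (since $X_k$, $X$ are compact and $G_k$, $G$ are closed in the respective isometry groups), so that the notion of Gromov--Hausdorff convergence between them is meaningful; this was already implicit in the setup of Corollary~\ref{cor:useful}.
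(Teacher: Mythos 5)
Your proposal is correct and matches the paper's own argument: the paper likewise derives this corollary directly from Corollary~\ref{cor:useful} (itself a consequence of the three preceding propositions), observing that the maps $\theta_k$ witnessing the equivariant convergence are $5\epsilon_k$-Gromov--Hausdorff approximations under the uniform metrics. The quantifier bookkeeping you spell out is exactly the implicit content of the paper's one-line justification.
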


Notice that what we have proved so far is stronger than the preceding corollary. What we have proved, in fact, is that the maps $\theta_k\colon G_k\rightarrow G$ that demonstrate the equivariant Gromov--Hausdorff convergence are also Gromov--Hausdorff approximations with an \textit{error} that tends to 0.

\begin{remark}While preparing this manuscript, there appeared preprints \cite{cavallucci2023gh,Zamora} with results similar to the preceding corollary. The proof in  \cite{cavallucci2023gh} uses ultralimits, is for pointed spaces, and the statement of their proposition is for geodesic spaces.  The proof in \cite{Zamora} is for pointed spaces and is technically different from ours.
\end{remark}

The fact that the maps $\theta_k\colon G_k\rightarrow G$ that appear in a triple $(f_k,\theta_k,\psi_k)$ that demonstrates the equivariant convergence $(X_k,G_k)\rightarrow_{eGH}(X,G)$ are, once we endow $G_k$ and $G$ with the uniform metrics, $5\epsilon_k$-Gromov--Hausdorff approximations, yields the following result; which will be useful in this section and the next.

\begin{corollary}
\label{nice corollary} Fix $\epsilon>0$ and 
assume $X$ is a compact metric space and $(f,\theta,\psi)$ is an $\epsilon$-equivariant Gromov--Hausdorff approximation from $(X,G)$ to $(X',G')$. Assume $\theta'\colon G\rightarrow G'$ is a map that is $\epsilon$-close to $\theta$ (with respect to the uniform metrics). Then, $(f,\theta',\psi)$ is a $2\epsilon$-equivariant Gromov--Hausdorff approximation. Consequently, $\theta'$ is a $10\epsilon$-Gromov--Hausdorff approximation.
\end{corollary}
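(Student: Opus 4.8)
The statement to prove is Corollary~\ref{nice corollary}: if $(f,\theta,\psi)$ is an $\epsilon$-equivariant Gromov--Hausdorff approximation from $(X,G)$ to $(X',G')$, and $\theta'$ is $\epsilon$-close to $\theta$ in the uniform metric, then $(f,\theta',\psi)$ is a $2\epsilon$-equivariant approximation, and hence (by Corollary~\ref{cor:useful}) $\theta'$ is a $10\epsilon$-Gromov--Hausdorff approximation.

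The plan is to check the three defining conditions of Definition~\ref{defeGH} for the triple $(f,\theta',\psi)$ with parameter $2\epsilon$. Condition (1) concerns only $f$, which is unchanged: $f$ is an $\epsilon$-isometry, hence a $2\epsilon$-isometry. Condition (3) concerns only the pair $(f,\psi)$, which is also unchanged: for $\lambda\in G'$ we already have $\lambda\circ f = f\circ\psi(\lambda)$ up to $\epsilon$, hence up to $2\epsilon$. So the only thing that genuinely needs work is condition (2): for $\gamma\in G$ we must show $\theta'(\gamma)\circ f = f\circ\gamma$ up to $2\epsilon$. Here I would fix $\gamma\in G$ and $x\in X$ and estimate, via one application of the triangle inequality,
\[
d_{X'}\bigl(\theta'(\gamma)(f(x)),\, f(\gamma x)\bigr)\;\le\; d_{X'}\bigl(\theta'(\gamma)(f(x)),\, \theta(\gamma)(f(x))\bigr) + d_{X'}\bigl(\theta(\gamma)(f(x)),\, f(\gamma x)\bigr).
\]
The second term is $\le\epsilon$ because $(f,\theta,\psi)$ is an $\epsilon$-equivariant approximation (condition (2) for $\theta$). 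For the first term, $\theta'$ being $\epsilon$-close to $\theta$ in the uniform metric means $d_{G'}(\theta'(\gamma),\theta(\gamma)) = \sup_{z\in X'} d_{X'}(\theta'(\gamma)(z),\theta(\gamma)(z)) \le \epsilon$; applying this at $z=f(x)$ bounds the first term by $\epsilon$. Summing gives $\le 2\epsilon$, and since $x$ was arbitrary this is exactly condition (2) with parameter $2\epsilon$. Thus $(f,\theta',\psi)$ is a $2\epsilon$-equivariant Gromov--Hausdorff approximation.

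For the final sentence, I would simply invoke Corollary~\ref{cor:useful}: a $2\epsilon$-equivariant Gromov--Hausdorff approximation has its $\theta$-component a $5(2\epsilon) = 10\epsilon$-Gromov--Hausdorff approximation under the uniform metric, so $\theta'$ is a $10\epsilon$-approximation. There is no real obstacle here --- the entire argument is a single triangle inequality plus bookkeeping --- but the one point to be careful about is the direction of the uniform-metric bound: one must recognize that "$\epsilon$-close in the uniform metric on the group" delivers a pointwise bound $d_{X'}(\theta'(\gamma)(z),\theta(\gamma)(z))\le\epsilon$ that is \emph{uniform in $z$}, which is precisely what lets the estimate at the specific point $z=f(x)$ go through without any further hypotheses on $f$ or continuity of $\theta,\theta'$.
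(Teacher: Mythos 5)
Your proof is correct and follows essentially the same route as the paper: the same triangle-inequality decomposition through $\theta(\gamma)(f(x))$, with one term controlled by the uniform $\epsilon$-closeness of $\theta'$ to $\theta$ and the other by the equivariance condition on $(f,\theta,\psi)$, followed by an appeal to Corollary~\ref{cor:useful} for the $10\epsilon$ statement. Your explicit verification of conditions (1) and (3) is a minor addition the paper leaves implicit.
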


\begin{proof}
Let $d_{X}$ denote the metric on $X$ and let $d_{X'}$ denote the metric on $X'$. Let $g\in G$. It suffices to verify the following inequality:
\[
d_{X'}(\theta'(g)f(x),f(gx))\leq 2\epsilon.
\]

Indeed, 
\[
d_{X'}(\theta'(g)f(x),f(gx))\leq d_{X'}(\theta'(g)f(x),\theta(g)f(x))+d_{X'}(\theta(g)f(x),f(gx)).
\]

Since $\theta$ is $\epsilon$-close to $\theta'$, $d_{X'}(\theta'(g)f(x),\theta(g)f(x))\leq \epsilon$. Since $(f,\theta,\psi)$ is an $\epsilon$-equivariant Gromov--Hausdorff convergence, $d_{X'}(\theta(g)f(x),f(gx))\leq \epsilon$.  That $\theta'$ is a $10\epsilon$-Gromov--Hausdorff approximation follows from the first three propositions in this section.
\end{proof}

\subsection*{Proof of Theorem \ref{T:theorem.B}} First, notice that any sequence of isometries on a compact metric space $X$ must be equicontinuous. Therefore, the Arzelà--Ascoli theorem tells us that such a sequence contains a subsequence that converges uniformly to a continuous function on $X$. The limit must be an isometry with respect to the uniform distance. Whence, every closed subgroup $G$ of Isom$(X)$ is compact. Now, let $(f_k,\theta_k,\psi_k)$ be an $\epsilon_k$-equivariant Gromov--Hausdorff approximation that witnesses the convergence $(X_k,G_k)\rightarrow_{eGH}(X,G)$. Then, as we have shown in corollary~\ref{cor:useful}, $\theta_k\colon G_k\rightarrow G$ is a $5\epsilon_k$-Gromov--Hausdorff approximation with respect to the uniform metric. As $G$ is a compact ENR, there exists a topological embedding $\phi \colon G\rightarrow \mathbb{R}^q$ for some $q<\infty$, a compact set $C$ in $\mathbb{R}^q$, for which $\phi(G)$ is contained in the interior of $C$ and for which there exists a retraction $r\colon C\rightarrow \phi(G)$ (cf.\ \cite{Hatcher,ivanov1997gromov}). Now we proceed as in \cite{ivanov1997gromov}.  As each $G_k$ is compact, we may find a finite $5\epsilon_k$-net $Y_k$ in $G_k$. For each natural number $k$, define $\sigma_k$ to be a real-valued function, with domain being $[0,\infty)$ such that $\sigma_k$ is positive on $[0,10\epsilon_k)$ and is zero otherwise. Now define $\phi_k$ to be the following continuous map. For $g_k\in G_k$, set
\[ \phi_k(g_k) = \frac{\sum_{h_k\in Y_k} \sigma_k(|g_k,h_k|)\phi(\theta_k(h_k))}{\sum_{h_k\in Y_k}\sigma_k(|g_k,h_k|)},
\]
where we have denoted the metric on $G_k$ by $|\ ,\ |$. That is, $|g_k,h_k|$ denotes the uniform distance between $g_k$ and $h_k$ in $G_k$. Notice that for any $g_k\in G_k$, there exists a $h_k\in Y_k$ such that $|g_k,h_k|\leq 5\epsilon_k$. Hence, $\sigma_k(|g_k,h_k|)>0$. Therefore the denominator is positive. Since $\phi(G)$ lies in the interior of the compact set $C$, for all sufficiently large $k$, the uniform distance between $\phi^{-1}\circ r\circ \phi_k$ and $\theta_k$ goes to $0$ as $k\to \infty$. Now the result follows from Corollary \ref{nice corollary}.\qed

\section{Proof of Theorem \ref{T:theorem.A} and Corollary \ref{C:corollary.C}.}
\label{s:proof.stability.thm}

In this section, we will first prove a  motivating stability result in the Riemannian manifold setting. We assume all Riemannian manifolds to be without boundary, of finite dimension, compact, connected, and with a lower sectional curvature bound. We establish that under certain nice conditions, equivariant Gromov--Hausdorff convergence implies stability in terms of equivariant cohomology groups. In the manifold setting, we assume all groups act $\emph{smoothly}$. Thus, properness of the action immediately follows. We further note that the motivating result below is essentially an easy consequence of standard facts. However, we include details for the convenience of the reader. For a reference on equivariant cohomology, we refer the reader to the book by Tu \cite{Tu}. 

\begin{prop}[Motivating]
Let $\{X_k\}_{k\in \mathbb{N}}$  be a sequence of connected compact Riemannian manifolds without boundary, with a uniform lower sectional curvature bound, a uniform dimension and uniform upper diameter bound. For each $k$, assume $G_k$ is an $m$-dimensional closed subgroup of $\Isom(X_k)$ acting freely on $X_k$. Suppose further that $(X_k,G_k)\rightarrow_{eGH}(X,G)$, where $X$ is a compact connected Riemannian manifold without boundary, having the same dimension as some (and hence all) $X_k$. If $G$ is an m-dimensional closed subgroup of $\Isom(X)$ acting freely on $X$. Then for all sufficiently large $k$, $H^{*}_{G_k}(X_k)\cong H^{*}_{G}(X)$.
\end{prop}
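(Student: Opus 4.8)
The plan is to reduce the statement to a homotopy-theoretic fact: the Borel constructions $X_k \times_{G_k} EG_k$ and $X \times_G EG$ are homotopy equivalent for large $k$, whence their singular cohomologies (which compute equivariant cohomology) agree. Since the groups act freely, the Borel construction is homotopy equivalent to the orbit space: $X_k \times_{G_k} EG_k \simeq X_k/G_k$ and $X \times_G EG \simeq X/G$. So it suffices to show that $X_k/G_k$ and $X/G$ are homeomorphic, or at least homotopy equivalent, for all sufficiently large $k$.

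The strategy for the orbit spaces is to invoke Perelman's stability theorem. First I would observe that because the actions are free, isometric, and smooth on compact manifolds with a uniform lower sectional curvature bound, the quotients $X_k/G_k$ are compact Alexandrov spaces; their curvature is bounded below by the same constant (submersions do not decrease curvature, by O'Neill), their diameter is bounded above by the uniform diameter bound on $X_k$, and their dimension equals $\dim X_k - m$, which is uniform. Next I would show that $(X_k, G_k) \to_{eGH} (X,G)$ forces $X_k/G_k \to_{GH} X/G$: given an $\epsilon_k$-equivariant approximation $(f_k, \theta_k, \psi_k)$, the map $f_k$ descends to an approximate map on quotients, and the equivariance conditions (2) and (3) in Definition~\ref{defeGH}, together with Corollary~\ref{cor:useful} controlling how far $\theta_k$ and $\psi_k$ are from being homomorphisms, guarantee that the induced map on orbit spaces is an $O(\epsilon_k)$-Gromov--Hausdorff approximation. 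This is the analogue at the level of quotients of the estimates already developed in Section~\ref{s:proof.technical.thm}, so the arithmetic should be routine. Then Perelman's stability theorem applies: a non-collapsing Gromov--Hausdorff convergent sequence of compact Alexandrov spaces of fixed dimension with uniform curvature and diameter bounds has $X_k/G_k$ homeomorphic to $X/G$ for large $k$. Non-collapse holds because $\dim(X/G) = \dim X - m = \dim X_k - m = \dim(X_k/G_k)$.

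Finally I would assemble the pieces. A homeomorphism $X_k/G_k \cong X/G$ induces an isomorphism on singular cohomology, and combining this with the homotopy equivalences $X_k \times_{G_k} EG_k \simeq X_k/G_k$ and $X \times_G EG \simeq X/G$ gives $H^*_{G_k}(X_k) \cong H^*(X_k/G_k) \cong H^*(X/G) \cong H^*_G(X)$ for all sufficiently large $k$, as claimed. (One could alternatively route this through Corollary~\ref{C:corollary.C} once Theorem~\ref{T:theorem.A} is available, but in the free manifold setting the direct argument via quotients is cleaner and avoids invoking the full equivariant stability theorem.)

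The main obstacle I expect is the rigorous verification that $X_k/G_k \to_{GH} X/G$ with an explicit error bound: one must check that the quotient metric on $X_k/G_k$ (defined by $d([x],[y]) = \inf_{g \in G_k} d_{X_k}(x, gy)$) is genuinely approximated by the push-forward of $f_k$, which requires carefully tracking how the equivariance defects of $\theta_k$ and $\psi_k$ interact with the infimum over group elements — here the fact that $\theta_k, \psi_k$ need not be homomorphisms means one cannot naively compose, and the estimates from Section~\ref{s:proof.technical.thm} (especially Proposition~\ref{prop:4e.equiv.GH.approximation} and Corollary~\ref{cor:useful}) are exactly what is needed to close the gap. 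A secondary point requiring care is confirming that the quotients are honest Alexandrov spaces of the expected dimension, i.e.\ that the free isometric action does not produce unexpected collapse or boundary, which follows from standard Riemannian submersion theory but should be stated precisely.
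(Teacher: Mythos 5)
Your proposal follows essentially the same route as the paper: both arguments use freeness to identify the Borel construction with the orbit space (up to weak homotopy equivalence), deduce $X_k/G_k \rightarrow_{GH} X/G$ from the equivariant convergence, note that equality of quotient dimensions rules out collapse, and invoke stability of non-collapsing Alexandrov convergence to conclude. Your write-up simply makes explicit some steps (Perelman's stability theorem, O'Neill for the curvature bound on quotients, the quantitative descent of $f_k$ to orbit spaces) that the paper leaves implicit.
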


Note that $H^{*}_{G}(X)$ denotes the equivariant cohomology group of the $G$-space $X$.

\begin{proof}
The projection map $\pi\colon X\rightarrow X/G$ is a principal $G$-bundle. Put $X_G= EG\times_{G} X$. Since $X_{G}\rightarrow X/G$ is a fiber bundle with fiber $EG$, the long exact sequence for homotopy groups tells us that $X_{G}$ and $X/G$ are weakly homotopy equivalent. Similarly, $(X_k)_{G_k}$ is weakly homotopy equivalent to $X_k/G_k$. Now observe that $\dim(X_k/G_k)=\dim(X/G)$ for all $k$. What is more, $X_k/G_k \rightarrow_{GH} X/G$  (with the orbit metric, see \cite{burago2022course}). Therefore, $X_k/G_k$ and $X/G$ are weakly homotopy equivalent for all large enough $k$. 
\end{proof}

The preceding proposition agrees with our intuition. Namely, it is reasonable to expect that once free actions are involved, arguments become much easier to construct. However, clearly not all group actions act freely. Therefore it is desirable to strengthen the previous result. The strengthening is Corollary \ref{C:corollary.C}, which in turn is a consequence of Theorem~\ref{T:theorem.A}.

\begin{remark}
\label{remark}
Before we begin with the proof of Theorem~\ref{T:theorem.A}, a few words are in order. Harvey proved in \cite{harvey2016convergence} that for pointed equivariant convergence, one can take the maps $G_k\rightarrow G$ in the equivariant convergence to be Lie group homomorphisms. In fact, Harvey further showed (\cite{harvey2016convergence}, Proposition 4.1) that once we restrict attention to pointed equivariant convergence, and $n$-dimensional Alexandrov spaces, with curvature bounded below by $K$, the maps \linebreak $G_k\rightarrow G$ in the equivariant Gromov--Hausdorff convergence, provided $G_k$ and $G$ are compact, can be taken to be injective Lie group homomorphisms. In particular, the proof depends on the convergence of balls. Our strategy is to reduce the convergence to a convergence of balls while keeping the maps $G_k\rightarrow G$  that appear in a triple that demonstrates the equivariant convergence fixed. In particular, we need to adjust the convergence and make it local.

\end{remark}

 We now prove Theorem \ref{T:theorem.A} and Corollary \ref{C:corollary.C} together.\newline\
 
\textbf{Convention.} In what follows, we denote by $\epsilon_k$ various positive values that tend to zero as $k\rightarrow \infty$.  In particular, any positive value proportional to $\epsilon_k$ is also denoted by $\epsilon_k$.

\subsection*{Proof of Theorem \ref{T:theorem.A} and Corollary \ref{C:corollary.C}} Let $(f_k,\theta_k,\psi_k)$ be an $\epsilon_k$-equivariant Gromov--Hausdorff approximation that witnesses the equivariant Gromov--Hausdorff convergence\linebreak $(X_k,G_k)\rightarrow_{eGH}(X,G)$, where $\{X_k\}_{k\in \mathbb{N}}$ denotes a sequence of compact Alexandrov spaces with curvature uniformly bounded below, uniform upper diameter bound, $X_k$ and $X$ have the same dimension for all $k$, and $G_k$ and $G$ are closed subgroups of $\Isom(X_k)$ and $\Isom(X)$ respectively, such that the dimension of $G_k$ and $G$ is the same for all $k$.

Following remark \ref{remark}, we need to adjust the convergence and make it local. To that end, fix an arbitrary $x\in X$.  For each integer $k$, choose $x_k\in X_k$ such that $d_{X}(f_k(x_k),x)\leq \epsilon_k$. Define $\tilde{f}_k\colon X_k\rightarrow X$  by 

\[ \tilde{f}_k(x')=\begin{cases} 
      x, &  x'=x_k. \\
      f_k(x'), & \space x'\neq x_k. \\

   \end{cases}
\]

Then, it follows that $\tilde{f}_k$ is an $\epsilon_k$-isometry. Moreover, due to the upper bound on the diameters of the terms of the sequence, we may choose $k$ large enough so that $(\tilde{f}_k,\theta_k,\psi_k)$ demonstrates the equivariant convergence.

For all large enough $k$, the map $\theta_k$ may now be taken to be an injective Lie group homomorphism (see \ref{remark}).  Thus $\theta_k(G_0^k)\subseteq G_0$. Here, $G_0^k$ denotes the identity component of $G_k$ and $G_0$ denotes the identity component of $G$. In fact, we have equality: Indeed, by the rank-nullity theorem, the map $\theta_k$ is a submersion. Define for all such sufficiently large $k$, $\tilde{\theta}_k\colon G_k/G_0^k\rightarrow G/G_0$  by
\[\tilde{\theta}_k(g_kG_0^k)=\theta_k(g_k)G_0\]

Since $\theta_k$ is an injective submersion, it follows that $\tilde{\theta}_k$ is well defined and injective. Hence, the number of components of $G_k$ is at most the number of components of $G$. Equip $G_k$ and $G$ with the uniform metrics, which we denote by $d_{G_k}$ and $d_{G}$ respectively. By Corollary~\ref{nice corollary}, the map $\theta_k\colon G_k\rightarrow G$ is an $\epsilon_k$-isometry.

We will now show that for all large $k$, the number of components between $G_k$ and $G$ is preserved. Regard $G_k$ as a $G_0^k$-space and $G$ as a $G_0$-space. Thereby viewing the cosets as equivalence classes. 
Define a metric $\tilde{d}$ on $G/G_0$, by letting
\[\tilde{d}([g],[g'])=\inf_{g\in [g], g'\in [g']}d_{G}(g,g').\]
In fact, compactness of $G$ ensures that the infimum is a minimum. It follows from \cite{Palais}, that $\tilde{d}$ is a metric on $G/G_0$.  To avoid cumbersome notation, we have regarded the components of $G_k$ and $G$ as equivalence classes.  We will show that for all large enough $k$, the map $\tilde{\theta}_k\colon G_k/G_0^k\rightarrow (G/G_0, \tilde{d})$ is surjective. Let $\epsilon$ be the minimum of the distances between two different cosets in $G/G_0$. Since $G$ is compact, $\epsilon>0$. Choose $k$ large enough so that $0<\epsilon_k<\epsilon.$ If $gG_0\in G/G_0$, then choose $g_k\in G_k$ such that $d_{G}(\theta_k(g_k),g)\leq \epsilon_k$. Then,
\[ \tilde{d}(\tilde{\theta}_k(g_kG_0^k),gG_0)\leq \epsilon_k.\]
Hence, $\tilde{\theta}_k(g_kG_0^k)=gG_0$. Thus for all large $k$, the number of components of $G_k$ and $G$ is preserved. Fix a large enough $k$ so that $G_k$ and $G$ have the same number of components. Then it follows that we must have $\theta_k(G_k)=G$. Thus,  $\theta_k$ is an isomorphism of Lie groups for all large $k$ \cite{Lee}.

Now, define a $G$-action on each $X_k$ as follows. For $g\in G$, define for $x_k\in X_k$,  $g\star x_k=\theta_k^{-1}(g)(x_k)$. Then, $(X_k,G)\rightarrow_{eGH}(X,G)$ (we will generalize this argument below in \ref{general}). Consequently for all large enough $k$, there exists an equivariant homeomorphism $\xi_k\colon X_k\rightarrow X$ (\cite{harvey2016equivariant}, Theorem 3.6). Now for $g_k\in G_k$, and $x_k\in X_k$, 
\[
\xi_k(g_kx_k)=\xi_k(\theta_k^{-1}(\theta_k(g_k))(x_k))=\xi_k(\theta_k(g_k)\star x_k)=\theta_k(g_k)\xi_k(x_k).
\]
Let $\theta \colon EG_k\rightarrow EG$ denote the natural map induced by $\theta_k$. The desired homeomorphism is the map $\Psi\colon EG_k \times_{G_k}X_k\rightarrow EG\times_{G} X$ that is defined by $\Psi([e,x_k])=[\theta(e),\xi(x_k)]$. \qed


\begin{remark}We can generalize the argument indicated above \label{general} as follows: For a sufficiently large $N$, we may use the rank-nullity theorem to see that  $\theta_{N}$ and $\theta_{N+i}^{-1}\theta_{N+i+1}$ for all $i\geq 0$, defines an inductive inverse limit $\tilde{G}=\varprojlim \tilde{G}_i$, where $\tilde{G}_0=G$, $\tilde{G}_i=G_{N+i-1}$ for $i\geq 1$. Then from \cite{nosmallsubgroups}, $\tilde{G}$ is a compact Lie group. For all $k\geq N$, define a $\tilde{G}$ action on $X_k$ and $X$ by projecting in the obvious way. Then, the distance between $(X_k,\tilde{G})$ and $(X,\tilde{G})$ goes to zero. Thus for sufficiently large $k$, $(X_k,\tilde{G})$ and $(X,\tilde{G})$ are equivariantly homeomorphic. Hence, let $\Psi \colon X_k\rightarrow X$ be such a homeomorphism (note, $\Psi$ depends on $k$). For $g_k\in G_k$ and $x_k\in X_k$, it now follows that $\Psi(g_kx_k)=\theta_k(g_k)\Psi(x_k)$.  The ideas of this argument are more general and the ideas apply to different contexts (as an example, see proposition 3.6 in \cite{fukayaandyamaguchiannals}).
\end{remark}

\begin{remark}
\label{effective remark}
Note that in the proof of Theorem \ref{T:theorem.A}, it was crucial that the $G_k$ and $G$ are closed subgroups of the respective isometry groups and thus, naturally define effective actions. We have used this property in a number of places. First, and foremost, the results we used in \cite{harvey2016convergence,harvey2016equivariant} are for effective actions. Second, in order to show that the number of components of the groups $G_k$ and $G$ is eventually preserved we used the fact that the isometry groups can be metrized with the uniform metric. Lastly, to invoke the result in \cite{harvey2016equivariant}, it was important to ensure that the 'new' $G$-action on the $X_k$ is effective. This follows from the fact that the actions of $G_k$ and $G$ are effective.
\end{remark}

Once we assume positive curvature, the diameter restriction in the hypotheses of Theorem \ref{T:theorem.A} is no longer necessary. This is articulated more precisely by the following corollary.


\begin{corollary}
Let $\{X_k\}_{k\in \mathbb{N}}$ and $X$ denote Alexandrov spaces with a positive uniform lower curvature bound. Assume further that the dimension of each $X_k$ agrees with the dimension of $X$. If each $G_k$ is a m-dimensional closed subgroup of $\Isom(X_k)$ and $G$ is an $m$-dimensional closed subgroup of $\Isom(X)$, if $(X_k,G_k)\rightarrow_{eGH}(X,G)$ then for all sufficiently large $k$, $H_{G_k}^{*}(X_k)\cong H_{G}^{*}(X)$.
\end{corollary}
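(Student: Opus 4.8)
The strategy is to reduce this to the previous corollary (the Riemannian proposition in the same section) together with Theorem~\ref{T:theorem.A}, the only new ingredient being that the Bonnet--Myers-type diameter bound for positively curved Alexandrov spaces lets us drop the explicit upper diameter hypothesis. First I would invoke the generalized Bonnet--Myers theorem for Alexandrov spaces: an $n$-dimensional Alexandrov space with curvature bounded below by a positive constant $\kappa>0$ has diameter at most $\pi/\sqrt{\kappa}$. Since all the $X_k$ and $X$ share a common positive lower curvature bound $\kappa$, they automatically satisfy a uniform diameter bound $D=\pi/\sqrt{\kappa}$. Hence the pairs $(X_k,G_k)$ and $(X,G)$ all lie in $\mathcal{M}(n,\kappa,\pi/\sqrt{\kappa},m)$, and the hypothesis $(X_k,G_k)\rightarrow_{eGH}(X,G)$ is exactly the hypothesis of Theorem~\ref{T:theorem.A}.

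Next I would apply Theorem~\ref{T:theorem.A}: for all sufficiently large $k$ there is a homeomorphism $\xi_k\colon X_k\rightarrow X$ and a Lie group isomorphism $\theta_k\colon G_k\rightarrow G$ intertwining the actions, i.e. $\xi_k\circ g_k=\theta_k(g_k)\circ\xi_k$ for all $g_k\in G_k$. From this data one builds, exactly as at the end of the proof of Theorem~\ref{T:theorem.A}, a homeomorphism of Borel constructions $EG_k\times_{G_k}X_k\cong EG\times_G X$ (this is Corollary~\ref{C:corollary.C}, whose hypotheses are now met). Since $H^{*}_{G_k}(X_k)$ is by definition the cohomology of $EG_k\times_{G_k}X_k$ and $H^{*}_{G}(X)$ the cohomology of $EG\times_G X$, a homeomorphism of these spaces induces an isomorphism $H^{*}_{G_k}(X_k)\cong H^{*}_{G}(X)$, which is the conclusion.

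I do not expect any serious obstacle here: the proof is essentially a bookkeeping argument that recognizes the positive-curvature hypothesis as supplying the missing uniform diameter bound and then quotes Theorem~\ref{T:theorem.A} and Corollary~\ref{C:corollary.C}. The only point deserving a sentence of care is the citation for the Bonnet--Myers estimate in the Alexandrov setting (e.g. \cite{burago1992ad} or \cite{burago2022course}), and the observation that the common lower curvature bound is what makes the resulting diameter bound \emph{uniform} across the whole sequence rather than merely finite term-by-term. One could also note that, strictly speaking, one does not even need the full strength of Theorem~\ref{T:theorem.A}: Corollary~\ref{C:corollary.C} already packages the equivariant homeomorphism into the statement about Borel constructions, and equivariant cohomology is a homeomorphism invariant of the Borel construction, so the chain of implications is short.
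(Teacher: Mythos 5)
Your proposal is correct and is essentially the argument the paper intends: the sentence preceding the corollary makes clear that the only point is that the Bonnet--Myers bound $\diam \leq \pi/\sqrt{\kappa}$ for positively curved Alexandrov spaces supplies the uniform diameter bound needed to place all pairs in $\mathcal{M}(n,\kappa,\pi/\sqrt{\kappa},m)$, after which Theorem~\ref{T:theorem.A} and Corollary~\ref{C:corollary.C} give homeomorphic Borel constructions and hence isomorphic equivariant cohomology.
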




\printbibliography

\end{document}